\renewcommand\theequation{\thesection.\arabic{equation}}
\newcommand{\BC}{{\mathbb {C}}}
\newcommand{\BR}{{\mathbb {R}}}
\newcommand{\BZ}{{\mathbb {Z}}}
\newcommand{\CL}{{\mathcal {L}}}
\newcommand{\CP}{{\mathcal {P}}}
\newcommand{\Fa}{{\mathfrak {a}}}
\newcommand{\Fg}{{\mathfrak {g}}}
\newcommand{\Fl}{{\mathfrak {l}}}
\newcommand{\GL}{{\mathrm{GL}}}
\newcommand{\tr}{{\mathrm{tr}}}
\newcommand{\zh}{Z_H(F)\backslash H(F)}
\newtheorem{thm}{Theorem}[section]
\newtheorem{prop}[thm]{Proposition}
\newtheorem {conj}[thm]{Conjecture}
\newtheorem {ques/conj}[thm]{Question/Conjecture}
\newtheorem{rmk}[thm]{Remark}
\newcommand{\Rmnum}[1]{\expandafter\@slowromancap\romannumeral #1@}
\begin{document}
\renewcommand{\theequation}{\arabic{equation}}
\numberwithin{equation}{section}

\title[On the Ginzburg-Rallis models]{The Local Ginzburg-Rallis Model Over Complex Field.}

\author{Chen Wan}
\address{School of Mathematics\\
University of Minnesota\\
Minneapolis, MN 55455, USA}
\email{wanxx123@umn.edu}

\subjclass[2010]{Primary 22E50}
\date{\today}
\keywords{Representations of Linear Algebraic Groups over archimedean local field, Multiplicity One on Vogan Packet}

\begin{abstract}
We consider the local Ginzburg-Rallis model over complex field. We show that the multiplicity is always 1 for a majority of the generic representations. We also have partial results on the real case for general generic representations. This is a sequel work of \cite{Wan15} and \cite{Wan16} on which we considered the p-adic case and the real case for tempered representations.
\end{abstract}

\maketitle

\tableofcontents

\section{Introduction and Main Result}
This paper is a continuation of \cite{Wan15} and \cite{Wan16}. For an overview of the Ginzburg-Rallis model, see Section 1 of \cite{Wan15}. We recall from there the definition of the Ginzburg-Rallis models and the conjectures.

Let $F$ be a local field (p-adic or archimedean), $D$ be the unique quaternion algebra over $F$ if $F\neq \BC$. Take $P=P_{2,2,2}=MU$ be the standard parabolic subgroup of $G=\GL_6(F)$ whose Levi part $M$ is isomorphic to $\GL_2\times \GL_2\times \GL_2$, and whose unipotent radical $U$ consists of elements of the form
\begin{equation}\label{unipotent}
u=u(X,Y,Z):=\begin{pmatrix} I_2 & X & Z \\ 0 & I_2 & Y \\ 0 & 0 & I_2 \end{pmatrix}.
\end{equation}
We define a character $\xi$ on $U$ by
\begin{equation}\label{character}
\xi(u(X,Y,Z)):=\psi(\tr(X)+\tr(Y))
\end{equation}
where $\psi$ is a non-trivial additive character on $F$. It's clear that the stabilizer of $\xi$ is the diagonal embedding of $\GL_2$ into $M$, which is denoted by $H_0$. For a given character $\chi$ of $F^{\times}$, one induces a one dimensional representation $\omega$ of $H_0(F)$ given by $\omega(h):=\chi(\det(h))$. We can extend the character $\xi$ to the semi-direct product
\begin{equation}
H:=H_0\ltimes U
\end{equation}
by making it trivial on $H_0$. Similarly we can extend the character $\omega$ to $H$. It follows that the one dimensional representation $\omega\otimes \xi$ of $H(F)$ is well defined. The pair $(G,H)$ is the Ginzburg-Rallis model introduced by D. Ginzburg and S. Rallis in their paper \cite{GR00}. Let $\pi$ be an irreducible admissible representation of $G(F)$ with central character $\chi^2$, we are interested in the Hom space
$Hom_{H(F)}(\pi,\omega\otimes \xi)$, the dimension of which is denoted by $m(\pi)$ and is called be the multiplicity.

On the other hand, if $F\neq \BC$, define $G_D=\GL_3(D)$, similarly we can define $U_D,\;H_{0,D}$ and $H_D$. We also define the character $\omega_D\otimes \xi_D$ on $H_D(F)$ in the same way except that the trace in the definition of $\xi$ is replaced by the reduced trace of the quaternion algebra $D$ and the determinant in the definition of $\omega$ is replaced by the reduced norm of the quaternion algebra $D$. Then for an irreducible admissible representation $\pi_D$ of $G_D(F)$ with central character $\chi^2$, we can also talk about the Hom space
$Hom_{H_D(F)}(\pi_D,\omega_D\otimes \xi_D)$, whose dimension is denoted by $m(\pi_D)$.

The purpose of this paper is to study the multiplicity $m(\pi)$ and $m(\pi_D)$. First, it was proved by C.-F. Nien in \cite{N06} over a $p$-adic local field, and by D. Jiang, B. Sun and C. Zhu in \cite{JSZ11} for an archimedean local field that
both multiplicities are less or equal to 1: $m(\pi),\;m(\pi_D)\leq 1.$
In other word, the pairs $(G,H)$ and $(G_D,H_D)$ are Gelfand pairs. In this paper,
we are interested in the relation between $m(\pi)$ and $m(\pi_D)$ under the local Jacquet-Langlands correspondence established in \cite{DKV84}. The local conjecture has been expected since the work of \cite{GR00}, and was first discussed in details by Jiang in his paper \cite{J08}.

\begin{conj}[Jiang,\cite{J08}]\label{jiang}
For any irreducible admissible representation $\pi$ of $GL_6(F)$, let $\pi_D$ be the local Jacquet-Langlands correspondence of $\pi$ to $GL_3(D)$ if it exists, and zero otherwise. In particular, $\pi_D$ is always 0 if $F=\BC$. We still assume that the central character of $\pi$ is $\chi^2$. Then the following identity
\begin{equation}\label{equation 1}
m(\pi)+m(\pi_D)=1
\end{equation}
holds for all irreducible generic representation $\pi$ of $GL_6(F)$.
\end{conj}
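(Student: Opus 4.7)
The plan is to treat the complex case specifically: when $F=\BC$ there is no quaternion division algebra, so $\pi_D$ is declared to be zero, $m(\pi_D)=0$, and Conjecture \ref{jiang} reduces to the single statement $m(\pi)=1$ for every irreducible generic representation $\pi$ of $\GL_6(\BC)$ with central character $\chi^2$. The approach is to follow the strategy pioneered by Waldspurger and Beuzart-Plessis in the Gan--Gross--Prasad setting and adapted by the author to the Ginzburg--Rallis model in \cite{Wan15} and \cite{Wan16}: establish a geometric multiplicity formula $m(\pi)=m_{\mathrm{geom}}(\pi)$, where $m_{\mathrm{geom}}(\pi)$ is expressed as an integral of the Harish-Chandra character of $\pi$, twisted by $\omega^{-1}$, over a set of regular semisimple orbits of $H_0(\BC)$ inside $G(\BC)$, and then evaluate it for generic $\pi$.

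The first step is to transport the tempered multiplicity formula of \cite{Wan16} from the real case to the complex case. The analytic framework (truncation, Plancherel-type estimates, convergence of the geometric expansion) carries over with only archimedean-specific adjustments, which yields $m(\pi)=1$ whenever $\pi$ is an irreducible tempered representation of $\GL_6(\BC)$. The second step is to promote this to the generic non-tempered case. Every irreducible generic representation of $\GL_n(\BC)$ is a full parabolic induction of essentially tempered data from some Levi subgroup; one then relates $m(\mathrm{Ind}_P^G(\tau))$ to a sum of multiplicities attached to the Levi factor through an open-orbit analysis of $P\bks G/H$ together with Frobenius reciprocity for the character $\xi$. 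Combined with the tempered result on the Levi side, this gives $m(\pi)=1$ for generic $\pi$ obtained in this way.

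The main obstacle lies in the parabolic descent step, and explains why only a majority of the generic representations can be handled directly. For certain singular configurations of the inducing data the poles of standard intertwining operators line up with the exceptional orbits in $P\bks G/H$, so that the geometric side of the multiplicity formula cannot be matched with the naive Levi contribution without more subtle regularization; a careful contour shift or a separate analytic argument using Casselman pairings and asymptotics of matrix coefficients along the line of \cite{JSZ11} is then required. Because $F=\BC$ eliminates the quaternionic side of the correspondence entirely, there is no endoscopic cancellation to exploit, so this last step is genuinely delicate and limits the scope of the main theorem to the generic representations whose inducing data avoid these singular loci.
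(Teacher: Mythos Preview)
Your overall outline---reduce to $m(\pi)=1$, handle tempered first, then push to generic via an open-orbit/Frobenius argument---matches the paper's architecture, but the two substantive steps are realized quite differently from what you propose.

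\textbf{Tempered case.} The paper does \emph{not} set up or invoke a geometric multiplicity formula $m(\pi)=m_{\mathrm{geom}}(\pi)$ in terms of character integrals. Over $\BC$ there are no discrete series above $\GL_1$, so every tempered $\pi$ is already a unitary principal series $I_R^G(\tau)$ from a good Borel $R$. The paper simply defines the regularized matrix-coefficient integral
\[
\CL_\pi(T)=\int_{Z_H\backslash H}^{\ast}\mathrm{Trace}\bigl(\pi(h^{-1})T\bigr)\xi(h)\omega(h)\,dh,
\]
notes that $\CL_\pi\neq 0\Rightarrow m(\pi)\neq 0$, and then uses the parabolic-induction invariance of nonvanishing of $\CL$ (Proposition~5.9 of \cite{Wan16}) to reduce to the torus $M_R$, where $\CL_\tau=\mathrm{Trace}$ is trivially nonzero. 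No truncation, no geometric expansion, no endoscopic matching is needed here; your proposed route would work in principle but is substantially heavier than what the paper does.

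\textbf{Generic case and the obstacle.} Your open-orbit/Frobenius idea is exactly what the paper uses, but your diagnosis of the obstruction is off. The issue is not poles of intertwining operators lining up with exceptional $P\backslash G/H$-orbits, nor is a contour shift or Casselman-type asymptotic argument invoked. The only analytic input is \emph{absolute convergence} of the generalized Jacquet integral
\[
J(f)=\int_{U_{\bar Q}(F)} f(u)\,\xi^{-1}(u)\,du,
\]
which Harish-Chandra's criterion (Proposition~2.1 in the paper) guarantees precisely when the Langlands exponents satisfy the strict inequalities forcing either $\bar P\subset Q$ or $Q\subset\bar P$. In the first case the paper descends to the ``middle model'' on $L$, twists away the exponents, and applies the tempered result there; in the second case it descends to the trilinear $\GL_2$ model on $M$ and quotes Loke \cite{L01}, which is also the source of the extra hypothesis in part~(2) of the main theorem. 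The cases not covered are exactly those where neither inclusion holds, so the Jacquet integral is not a priori convergent; the paper explains (Section~7) that these would be handled by holomorphic continuation of $J(f)$ rather than by the regularization you sketch.
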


Note that the assertion in Conjecture \ref{jiang} can be formulated in terms of Vogan packets and pure inner forms of $PGL_6$. We refer to
\cite{Wan15} for discussion.

Another aspect of the local conjecture is to relate the multiplicity to the central value of the exterior cube epsilon factors. It can be stated as follows.
\begin{conj}\label{epsilon}
With the same assumptions as in Conjecture \ref{jiang}, assume that the central character of $\pi$ is trivial, then we have
\begin{eqnarray*}
m(\pi)=1 &\iff& \epsilon(1/2,\pi,\wedge^3)=1,\\
m(\pi)=0 &\iff& \epsilon(1/2,\pi,\wedge^3)=-1.
\end{eqnarray*}
\end{conj}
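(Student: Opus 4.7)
Because $F=\BC$ the quaternion algebra $D$ does not exist and $m(\pi_D)=0$ by the convention in Conjecture \ref{jiang}, so Conjecture \ref{epsilon} reduces to the equivalence
\[
m(\pi)=1 \iff \epsilon(1/2,\pi,\wedge^3)=1
\]
for every irreducible generic $\pi$ of $\GL_6(\BC)$ with trivial central character. My plan is to combine a multiplicity formula in the spirit of the local relative trace formula strategy of \cite{Wan15,Wan16} with an explicit computation of the archimedean exterior cube epsilon factor, which is available on the principal series realization over $\BC$.

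The first step is parameterization and reduction. Over $\BC$ every irreducible generic representation of $\GL_6$ is a full principal series $\pi=\Ind_B^{G}(\chi_1\otimes\cdots\otimes\chi_6)$; there are no discrete series and no $L$-packet subtleties, and trivial central character simply becomes $\prod_i\chi_i=1$. This dramatically simplifies the harmonic analysis relative to the real case of \cite{Wan16} and lets me work in a single connected parameter space.

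The second step is to produce and track a Ginzburg-Rallis functional. Following the philosophy of \cite{Wan15,Wan16}, I would construct a candidate $(H_0,\omega)\otimes\xi$-equivariant functional as an integral against the Whittaker model of $\pi$, show convergence in an open cone of parameters $(\chi_1,\ldots,\chi_6)$, and meromorphically continue using a Bernstein-type argument. The Gelfand pair result of \cite{JSZ11} forces $m(\pi)\in\{0,1\}$, so it suffices to decide when the continued functional is nonzero; generically, i.e.\ outside a hyperplane arrangement in the inducing parameters, one expects nonvanishing, which is consistent with the ``majority of generic representations'' clause stated in the abstract. On the other side,
\[
\epsilon(1/2,\pi,\wedge^3)=\prod_{i<j<k}\epsilon(1/2,\chi_i\chi_j\chi_k,\psi),
\]
and each archimedean factor $\epsilon(1/2,\chi_i\chi_j\chi_k,\psi)$ is an explicit power of $i$ depending only on the discrete parameter of the character $\chi_i\chi_j\chi_k$ of $\BC^\times$; one can therefore view both sides of the conjecture as explicit functions on the same space of Langlands parameters and attempt to match their vanishing loci.

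The main obstacle will be matching boundary behavior on the reducibility walls: both the Ginzburg-Rallis integral (which develops poles on certain hyperplanes) and the epsilon factor (which picks up extra signs as $\chi_i\chi_j\chi_k$ degenerates) must be tracked carefully, and the two must degenerate in exactly the same way. A secondary difficulty is sign calibration between the continued functional and the $\epsilon$-factor normalization, which I would pin down in a single reference example—say, $\pi$ induced from six nearly-unramified unitary characters—where $m(\pi)$ and the exterior cube epsilon factor are both directly computable and the whole equivalence can be verified by hand before propagating it by the analytic continuation above.
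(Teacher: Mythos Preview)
Your proposal misses the central simplification that makes the complex case easy on the epsilon-factor side and thereby sets up the wrong problem.

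Over $\BC$ the epsilon factor $\epsilon(1/2,\pi,\wedge^3)$ is \emph{always} equal to $1$ for generic $\pi$ with trivial central character. The paper sees this in two lines: write $\pi$ as induced from $\GL_5\times\GL_1$, so the Langlands parameter is $\phi=\phi_1\oplus\phi_2$ with $\det(\phi_1)\det(\phi_2)=1$; then $\wedge^3\phi=\wedge^3\phi_1\oplus(\wedge^2\phi_1\otimes\phi_2)$, and the trivial-central-character condition forces these two summands to be dual to each other. The epsilon factor of a representation of the form $\rho\oplus\rho^\vee$ at $s=1/2$ is $\det(\rho)(-1)$, and here $\det(\wedge^3\phi_1)=\det(\phi_1)^6$, which evaluates to $1$ at $-1$. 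So your product $\prod_{i<j<k}\epsilon(1/2,\chi_i\chi_j\chi_k,\psi)$ is identically $1$; there is no ``vanishing locus'' to match, no sign degeneration on reducibility walls, and no calibration example needed. All of the ``main obstacles'' you list are empty.

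Consequently the entire content of Conjecture~\ref{epsilon} over $\BC$ is the assertion $m(\pi)=1$, i.e.\ Conjecture~\ref{jiang} itself. Your plan for that part---build a functional from the Whittaker model, continue meromorphically, and hope it is generically nonzero---is too vague to count as a strategy, and ``generically nonzero'' is not what is needed: you must show nonvanishing \emph{everywhere}. The paper handles this by two concrete mechanisms: for tempered $\pi$ it integrates matrix coefficients over $Z_H\backslash H$ and shows nonvanishing is preserved under parabolic induction down to the torus (where it is obvious); for nontempered $\pi$ it uses the open orbit method to pass to a Levi and reduce either to the tempered case or to the trilinear $\GL_2$ model of Loke. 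Your proposal does not engage with either of these and does not supply an alternative.
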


In this paper, we always fix a Haar measure $dx$ on $F$ and an additive character $\psi$ such that the Haar measure is selfdual for Fourier transform with respect to $\psi$. We use such $dx$ and $\psi$ in the definition of the $\epsilon$ factor. For simplicity, we will write the epsilon factor as $\epsilon(s,\pi,\rho)$ instead of $\epsilon(s,\pi,\rho,dx,\psi)$.

\begin{rmk}
In Conjecture \ref{epsilon}, we do need the assumption that the central character of $\pi$ is trivial. Otherwise, the exterior cube of the Langlands parameter of $\pi$ will no longer be selfdual, and hence the value of the epsilon factor at $1/2$ may not be $\pm 1$.
\end{rmk}

In the previous papers \cite{Wan15} and \cite{Wan16}, we prove Conjecture \ref{jiang} for the case that $F$ is a $p$-adic local field or $\BR$ and $\pi$ is an irreducible tempered representation of $GL_6(F)$. In \cite{Wan16}, we also prove Conjecture \ref{epsilon} for the case $F=\BR$ and $\pi$ is tempered, together with the case when $F$ is p-adic and $\pi$ is tempered but not discrete series or parabolic induction of discrete series of $\GL_4(F)\times \GL_2(F)$.

In this paper, we considered the case when $F=\BC$. In this case, by the Langlands classification, any generic representation $\pi$ is a principal series. In other word, let $B=M_0 U_0$ be the Borel subgroup consists of all the lower triangular matrix, here $M_0=(\GL_1)^6$ is just the diagonal matrix. Then $\pi$ is of the form $I_{B}^{G}(\chi)$ where $\chi=\Pi_{i=1}^{6} \chi_i$ is a character on $M_0$ and $I_{B}^{G}$ is the normalized parabolic induction. For $1\leq i\leq 6$, we can find an unitary character $\sigma_i$ and some real number $s_i\in \BR$ such that $\chi_i=\sigma_i |\;|^{s_i}$. Without loss of generality, we assume that $s_i\leq s_j$ for any $i\geq j$. Then if we combine those representations with the same exponents $s_i$, we can find a parabolic subgroup $Q=LU_Q$ containing $B$ with $L=\Pi_{i=1}^{k}\GL_{n_i}$, a representation $\tau=\Pi_{i=1}^{k}\tau_i |\;|^{t_i}$ of $L(F)$ where $\tau_i$ are all tempered and the exponents $t_i$ are strictly decreasing (i.e. $t_1<t_2<\cdots<t_k$) such that $\pi=I_{Q}^{G}(\tau)$. On the other hand, we can also write $\pi$ as $I_{\bar{P}}^{G}(\pi_0)$ with $\pi=\pi_1\times \pi_2\times \pi_3$ and $\pi_i$ be the parabolic induction of $\chi_{2i-1}\times \chi_{2i}$.

\begin{thm}\label{main}
Assume that $F=\BC$, with the same assumptions as in Conjecture \ref{jiang} and with the notation above, the following hold.
\begin{enumerate}
\item If $\bar{P}\subset Q$, Conjecture \ref{jiang} and Conjecture \ref{epsilon} hold. In particular, both conjectures hold for the tempered representations.
\item If $Q\subset \bar{P}$ and if $\pi_0$ satisfies the condition (40) in \cite{L01}, Conjecture \ref{jiang} and Conjecture \ref{epsilon} hold.
\end{enumerate}
\end{thm}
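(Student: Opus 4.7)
Since $F=\BC$, the Jacquet--Langlands correspondence assigns $\pi_D=0$, so Conjecture \ref{jiang} reduces to showing $m(\pi)=1$. Any generic $\pi$ is a principal series, and I would work throughout with the description $\pi=I_{\bar P}^{G}(\pi_0)$ where $\pi_0=\pi_1\otimes\pi_2\otimes\pi_3$ sits on $M=\GL_2\times\GL_2\times\GL_2$ with each $\pi_i$ a principal series of $\GL_2(\BC)$. The overall strategy is to reduce the Ginzburg--Rallis problem on $(G,H)$ to the trilinear-form problem on the diagonal $\GL_2$, and then invoke Prasad--Loke-type results for $\GL_2(\BC)$.

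The first step will be a Mackey-type identity
\[\Hom_{H(\BC)}\bigl(I_{\bar P}^{G}(\pi_0),\omega\otimes\xi\bigr)\cong\Hom_{H_0(\BC)}(\pi_0,\omega)\]
obtained by analyzing the double coset space $\bar P(\BC)\bs G(\BC)/H(\BC)$. The open orbit has stabilizer exactly $H_0=\GL_2$ diagonally embedded in $M$, while on every other stratum I expect $\xi|_U$ to be non-trivial on a unipotent subgroup of the stabilizer, so the corresponding Hom on the associated subquotient vanishes. Over $\BC$ the argument should be formulated in the Casselman--Wallach category of smooth admissible Fr\'echet representations of moderate growth, where the geometric filtration on induced representations is well behaved.

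Once the problem is reduced to trilinear forms on $\GL_2(\BC)$, the two cases of the theorem are separate applications. In case (1) the hypothesis $\bar P\subset Q$ forces $s_{2i-1}=s_{2i}$ within each $\GL_2$-block, so every $\pi_i$ is tempered; Prasad's trilinear form theorem (in its complex version due to Loke) then yields $\dim\Hom_{\GL_2(\BC)}(\pi_0,\chi\circ\det)=1$, unconditionally since there is no quaternion algebra over $\BC$. In case (2) the $\pi_i$'s need not be tempered, and I would appeal directly to condition (40) of \cite{L01}, which is exactly the hypothesis under which Loke produces a non-zero trilinear form; combined with the upper bound $m(\pi_0)\le 1$ of \cite{JSZ11} this yields $m(\pi)=1$. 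For Conjecture \ref{epsilon}, I would decompose $\wedge^3\pi$ via the isobaric structure $\pi=\pi_1\boxplus\pi_2\boxplus\pi_3$ into the six twists $\det(\pi_i)\otimes\pi_j$ for $i\ne j$ together with the triple product $\pi_1\otimes\pi_2\otimes\pi_3$, compute each $\epsilon(1/2,\cdot)$ by inductivity of local epsilon-factors, and invoke Prasad's explicit root number formula for the triple-product piece to verify that the total equals $+1$.

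The hard part will be justifying the Mackey reduction in the archimedean setting, especially in case (2) where $\pi_0$ may be reducible and the $\GL_2$-principal series can have non-trivial composition series. Unlike the smooth $p$-adic situation, over $\BC$ the Bruhat filtration on Fr\'echet induced representations is delicate, and ruling out contributions from non-open orbits will require careful control of $\xi|_U$ on every non-open stabilizer together with Casselman--Wallach exactness. The role of condition (40) of \cite{L01} in case (2) appears to be precisely to exclude the degenerate configurations where a non-open stratum would otherwise contribute extra Hom.
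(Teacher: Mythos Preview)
Your plan hinges on a Mackey-type isomorphism $\Hom_{H}(I_{\bar P}^{G}(\pi_0),\omega\otimes\xi)\cong\Hom_{H_0}(\pi_0,\omega)$ obtained from a full Bruhat--orbit analysis. This is both stronger than needed and, in case~(1), not executable by the route you sketch. The paper never rules out closed strata; it only produces a nonzero element of $\Hom_H(\pi,\omega\otimes\xi)$ from the open orbit, and the multiplicity-one bound $m(\pi)\le 1$ then forces $m(\pi)=1$.

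The concrete gap is in case~(1). The open-orbit construction through $\bar P$ is the generalized Jacquet integral $J(f)=\int_{U(F)}f(u)\xi^{-1}(u)\,du$, and Harish-Chandra's estimate gives absolute convergence only when $s_2<s_3$ and $s_4<s_5$. When $\bar P\subsetneq Q$ (types $(6)$, $(4,2)$, $(2,4)$) one of these is an equality, so the integral diverges and the reduction through $\bar P$ is unavailable; your fallback is the archimedean Bruhat filtration, which you correctly flag as delicate and do not carry out. The paper instead reduces through the \emph{coarser} parabolic $Q$: the Jacquet integral over $U_{\bar Q}$ converges because the exponents $t_i$ are strictly ordered across $Q$-blocks, and one lands in a ``middle model'' on the Levi $L$. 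After twisting to make the inducing data tempered, nonvanishing there is established not via Prasad or Loke but by a direct argument---integrating matrix coefficients over the relevant $H$-quotient and showing the resulting functional $\CL_\tau$ is nonzero by passing along a good minimal parabolic down to the torus, where it is obvious. So in case~(1) the trilinear-form literature is never invoked.

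Two smaller corrections. Condition~(40) of \cite{L01} has nothing to do with closed Bruhat strata; it is precisely the hypothesis under which Loke produces a nonzero trilinear form on $\GL_2(\BC)$, and the paper uses it only at the end of case~(2), after the open-orbit reduction (which is legitimate there because $Q\subset\bar P$ supplies the strict inequalities needed for $J$ to converge). And the $\epsilon$-factor computation is simpler than your triple-product decomposition: write the Langlands parameter as $\phi=\phi_1\oplus\phi_2$ with $\dim\phi_2=1$; then $\wedge^3\phi=\wedge^3\phi_1\oplus(\wedge^2\phi_1\otimes\phi_2)$, and triviality of the central character makes these summands mutually contragredient, giving $\epsilon(1/2,\pi,\wedge^3)=\det(\wedge^3\phi_1)(-1)=\det(\phi_1)^6(-1)=1$ with no triple-product input.
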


There are two main ingredients in our proof. First we deal with the tempered representations. The idea is to construct an explicit element inside the Hom space given by integrating the matrix coefficient. Then we show that the nonvanishing property of this element is invariant under parabolic induction which allows us to reduce to the torus case which is trivial. This idea already appears in \cite{Wan16} for the case when $F=\BR$.

Then for general generic representations, we use the open orbit method to reduce our problems to the tempered case or the trilinear $\GL_2$ model case. To be specific, if $\bar{P}\subset Q$, by applying the open orbit method, we can reduce to the model related to the Levi subgroup $L$. Then after twisting $\tau$ by some characters, we only need to deal with the tempered case which has already been proved in the first place. If $Q\subset \bar{P}$, by applying the open orbit method, we reduced ourselves to the trilinear $\GL_2$ model case. Then by applying the work of Loke in \cite{L01}, we can prove our result. The extra condition in part (2) of Theorem \ref{main} also comes from \cite{L01}.

It is worth to mention that in Theorem \ref{main}(2), the requirements we made for the parabolic subgroup $Q$ force some types of generalized Jacquet integrals to be absolutely convergent, this allows us to apply the open orbit method. If one can prove such integrals have holomorphic continuation, we can actually remove this restrain. This will be discussed in Section 7.

Finally, the open orbit method we used here can also be applied to the case when $F=\BR$, this will gives us partial results about Conjecture \ref{jiang} and Conjecture \ref{epsilon} for general generic representations. To be specific, let $\pi$ be a irreducible generic representation of $G(F)$ with central character $\chi^2$. By the Langlands classification, there is a parabolic subgroup $Q=LU_Q$ containing the lower Borel subgroup and an essential tempered representation $\tau=\Pi_{i=1}^{k} \tau_i |\;|^{s_i}$ of $L(F)$ with $\tau_i$ tempered, $s_i\in \BR$ and $s_1<s_2<\cdots<s_k$ such that $\pi=I_{Q}^{G}(\tau)$. We say $Q$ is nice if $Q\subset \bar{P}$ or $\bar{P}\subset Q$.

\begin{thm}\label{F=R}
With the notations above, the following hold.
\begin{enumerate}
\item If $\pi_D=0$, assume that $Q$ is nice, then Conjecture \ref{jiang} and Conjecture \ref{epsilon} hold.
\item If $\pi_D\neq 0$, we have
$$m(\pi)+m(\pi_D)\geq 1.$$
Moreover if the central character of $\pi$ is trivial (as in Conjecture \ref{epsilon}), we have
$$\epsilon(1/2,\pi,\wedge^3)=1\Rightarrow m(\pi)=1; \; m(\pi)=0\Rightarrow \epsilon(1/2,\pi,\wedge^3)=-1.$$
\end{enumerate}
\end{thm}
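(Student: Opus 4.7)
The strategy is to extend the open-orbit/Mackey analysis used for $F=\BC$ to $F=\BR$, feeding in two known inputs: the tempered case of Conjectures \ref{jiang} and \ref{epsilon} established in \cite{Wan16}, and Loke's classification of trilinear $\GL_2$ invariants over $\BR$ from \cite{L01}. Part (1) splits according to which of the two inclusions in the niceness hypothesis holds.

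In the subcase $\bar P\subset Q$ the Levi $L$ contains $M$ and $\tau$ is an essentially tempered representation of $L$ whose exponents are strictly increasing. This is precisely the range in which the matrix-coefficient integral of \cite{Wan16} converges absolutely and produces an explicit element of $\Hom_{H(F)}(\pi,\omega\otimes\xi)$ whose nonvanishing is equivalent to the nonvanishing of an analogous invariant for $\tau$. The parabolic-induction invariance argument of \cite{Wan16} then reduces the computation of $m(\pi)$, and of the relevant $\epsilon$-factor after untwisting by central characters, to the tempered case already treated there. Since $\pi_D=0$, no parallel argument on $G_D$ is required.

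In the subcase $Q\subset \bar P$ one writes $\pi=I_{\bar P}^G(\pi_0)$ with $\pi_0=\pi_1\otimes\pi_2\otimes\pi_3$ a representation of $M=\GL_2^3$. The open-orbit method applied to $\bar P\backslash G/H$, together with the fact that the generalized Jacquet integrals realizing the resulting isomorphism converge in the range dictated by $Q\subset\bar P$, yields
\[
\Hom_{H(F)}\bigl(I_{\bar P}^G(\pi_0),\omega\otimes\xi\bigr)\simeq \Hom_{H_0(F)}\bigl(\pi_1\otimes \pi_2\otimes \pi_3,\omega\bigr),
\]
reducing the problem to the trilinear $\GL_2$ model over $\BR$. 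This is precisely the setting of \cite{L01}, and matching Loke's formulas against the factorization of $\epsilon(1/2,\pi,\wedge^3)$ under parabolic induction delivers both Conjecture \ref{jiang} and Conjecture \ref{epsilon}.

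For part (2) no niceness hypothesis on $Q$ is available but $\pi_D\neq 0$ is assumed. An adaptation of the matrix-coefficient construction of \cite{Wan16} combined with the tempered case of Conjecture \ref{jiang} applied to the essentially tempered datum on $L$ still produces a nonzero element in at least one of $\Hom_{H(F)}(\pi,\omega\otimes\xi)$ or $\Hom_{H_D(F)}(\pi_D,\omega_D\otimes\xi_D)$, which combined with the Gelfand-pair bounds $m(\pi),m(\pi_D)\leq 1$ yields $m(\pi)+m(\pi_D)\geq 1$. For the epsilon-factor implications, the tempered case of Conjecture \ref{epsilon} together with the compatibility of $\wedge^3$ with parabolic induction forces this nonvanishing to occur on the $\GL_6$-side exactly when $\epsilon(1/2,\pi,\wedge^3)=1$ and on the $\GL_3(D)$-side when it equals $-1$, which gives the two one-sided implications. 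The principal technical obstacle throughout, and the reason only half of Conjecture \ref{jiang} is recovered in part (2), is controlling the convergence or holomorphic continuation of the generalized Jacquet integrals in the absence of a niceness hypothesis on $Q$; this issue is revisited in Section 7.
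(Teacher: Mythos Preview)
Your outline for Part (1) is roughly aligned with the paper, though with two inaccuracies. First, in the subcase $\bar P\subset Q$ the paper does \emph{not} extend the matrix-coefficient integral of \cite{Wan16} to essentially tempered $\pi$; instead it applies the open-orbit method (the generalized Jacquet integral $J_Q$ over $U_{\bar Q}$, as in Section~5.1) to pass from $\pi$ to $\tau$ on $L$, and only \emph{then} twists $\tau$ to a tempered representation where \cite{Wan16} applies. Your claim that the matrix-coefficient integral itself converges in the essentially tempered range is not justified and is not how the reduction goes. Second, in the subcase $Q\subset\bar P$ the open-orbit method yields only the implication $m(\pi_0)\neq 0\Rightarrow m(\pi)\neq 0$, not an isomorphism of Hom spaces; fortunately this one-sided implication suffices since $\pi_D=0$ forces $m(\pi_0)=1$ (because one $\pi_i$ is a principal series, hence $\pi_{0,D}=0$, and then Prasad/Loke applies). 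The paper's epsilon argument here is also simpler than yours: rather than matching Loke's formulas, it observes directly that $\pi_D=0$ forces a $\GL_5\times\GL_1$ factorization, whence $\epsilon(1/2,\pi,\wedge^3)=1$ as in Section~3.

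Part (2) has a genuine gap. You do not use the key structural consequence of $\pi_D\neq 0$: by the local Jacquet--Langlands correspondence, $\pi$ must be induced from an essential discrete series $\pi_0=\pi_1|\cdot|^{s_1}\otimes\pi_2|\cdot|^{s_2}\otimes\pi_3|\cdot|^{s_3}$ of $M=\GL_2^3$, so automatically $Q\subset\bar P$. The paper then reduces everything to Prasad's trilinear $\GL_2$ dichotomy $m(\pi_0)+m(\pi_{0,D})=1$ and its epsilon refinement, via the implication $m(\pi_0)\neq 0\Rightarrow m(\pi)\neq 0$ (and the quaternion analogue), proved by a case analysis on the exponents $s_1,s_2,s_3$: when some $s_i$ coincide one first passes to the middle model or twists to tempered, and when all are distinct the open-orbit method applies directly. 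Your appeal to ``an adaptation of the matrix-coefficient construction'' and ``the tempered case applied to the essentially tempered datum on $L$'' does not supply this mechanism; in particular, neither the inequality $m(\pi)+m(\pi_D)\geq 1$ nor the epsilon implications follow without the trilinear $\GL_2$ input and the identity $\epsilon(1/2,\pi,\wedge^3)=\epsilon(1/2,\pi_0)$ from \cite{Wan16}.
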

As in the complex case, the assumption on $Q$ can be removed if we can prove the holomorphic continuation of certain generalized Jacquet integrals. This will also be discussed in Section 7.

The paper is organized as following: In Section 2, we review a well know result of the intertwining operator which is due to Harish-Chandra. We will also give a brief overview of the open orbit method which will be used in later sections. In Section 3, we show that for $F=\BC$, Conjecture \ref{jiang} implies Conjecture \ref{epsilon}. In Section 4, we prove Theorem \ref{main} for the tempered representations. Then in Section 5, we prove it for the general cases. In Section 6, we discuss the case for $F=\BR$. In Section 7, we will talk about how to remove the assumptions on $Q$ based on the results on the holomorphic continuation of the generalized Jacquet integral due to Raul Gomez in \cite{G}.

\textbf{Acknowledgement}: I would like to thank my advisor Dihua Jiang for suggesting me thinking about this problem. I would like to thank Omer Offen for helpful discussions of the open orbit method. I would like to thank Nolan Wallach and Raul Gomez for answering my questions on the generalized Jacquet integrals.

\section{Preliminary}
\subsection{The intertwining operator}
For every connected reductive algebraic group $G$ defined over $F$, let $A_G$ be the maximal split center of $G$ and $Z_G$ be the center of $G$.
We denote by $X(G)$ the group of $F$-rational characters of $G$. Define $\Fa_G=$Hom$(X(G),\BR)$, and let $\Fa_{G}^{\ast}=X(G)\otimes_{\BZ} \BR$ be the dual of $\Fa_G$. We define a homomorphism $H_G:G(F)\rightarrow \Fa_G$ by $H_G(g)(\chi)=\log(|\chi(g)|_F)$ for every $g\in G(F)$ and $\chi\in X(G)$.

Given a parabolic subgroup $P=MU$ of $G$ and an admissible representation $(\tau,V_{\tau})$ of $M(F)$, let $(I_{P}^{G}(\tau),I_{P}^{G}(V_{\tau}))$ be the normalized parabolic induced representation: $I_{P}^{G}(V_{\tau})$ consist of smooth functions $e:G(F)\rightarrow V_{\tau}$ such that
$$e(mug)=\delta_P(m)^{1/2} \tau(m)e(g), \;m\in M(F),\; u\in U(F),\; g\in G(F).$$
And the $G(F)$ action is just the right translation.

For $\lambda\in \Fa_{M}^{\ast}\otimes_{\BR}\BC$, let $\tau_{\lambda}$ be the unramified twist of $\tau$, i.e. $\tau_{\lambda}(m)=\exp(\lambda(H_M(m))) \tau(m)$ and let $I_{P}^{G}(\tau_{\lambda})$ be the induced representation. By the Iwasawa decomposition, every function $e\in I_{P}^{G}(V_{\tau})$ is determined by its restriction on $K$, and that space is invariant under the unramified twist. i.e. for any $\lambda$, we can realize the representation $I_{P}^{G}(\tau)$ on the space $I_{K\cap P}^{K}(\tau_K)$ which is consisting of functions $e_K:K\rightarrow V_{\tau}$ such that
$$e(mug)=\delta_P(m)^{1/2} \tau(m)e(g), \;m\in M(F)\cap K,\; u\in U(F)\cap,\; g\in K.$$

Now we define the intertwining operator. For a Levi subgroup $M$ of $G$, $P=MU,P'=MU'\in \CP(M)$, and $\lambda\in \Fa_{M}^{\ast}\otimes_{\BR} \BC$, define the intertwining operator $J_{P'|P}(\tau_{\lambda}):I_{P}^{G}(V_{\tau})\rightarrow I_{P'}^{G}(V_{\tau})$ to be
$$J_{P'|P}(\tau_{\lambda})(e)(g)=\int_{(U(F)\cap U'(F))\backslash U'(F)} e(ug)du.$$
In general, the integral above is not absolutely convergent. But it is absolutely convergent when $Re(\lambda)$ lies inside a positive cone, and it is $G(F)$-invariant. By restricting to $K$, we can view $J_{P'|P}(\tau_{\lambda})$ as a homomorphism from $I_{P}^{G}(V_{\tau_K})$ to $I_{P'}^{G}(V_{\tau_K})$. In general, $J_{P'|P}(\tau_{\lambda})$ can be meromorphically continued to a function on $\Fa_{M}^{\ast}\otimes_{\BR}\BC $. Moreover, if we assume that $\tau$ is tempered, we have the following proposition which is due to Harish-Chandra.

\begin{prop}\label{convergent}
With the notations above, assume that $\tau$ is tempered, then the intertwining operator $J_{P'|P}$ is absolutely convergent for all $\lambda \in \Fa_{M}^{\ast}\otimes_{\BR} \BC$ with $<Re(\lambda),\check{\alpha}>\; >0$ for every $\alpha\in \Sigma(P)\cap \Sigma(\bar{P}')$. Here $\Sigma(P)$ is the subsets of the roots of $A_M$ that are positive with respect to $P$.
\end{prop}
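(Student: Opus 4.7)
The plan is to reproduce the classical argument of Harish-Chandra in the form needed here. The first step is to reduce the absolute convergence of the intertwining integral to a scalar estimate on $U'(F)$. Fix $g\in G(F)$ and $e\in I_P^G(V_\tau)$, and use the Iwasawa decomposition with respect to $P=MU$ to write $u = m_P(u)\, n_P(u)\, k_P(u)$ with $m_P(u)\in M(F)$, $n_P(u)\in U(F)$, $k_P(u)\in K$. The transformation rule for sections of $I_P^G(\tau_\lambda)$ then gives
$$\|e(ug)\|_{V_\tau} = \delta_P(m_P(u))^{1/2}\, e^{\langle \Re(\lambda),\, H_M(m_P(u))\rangle}\, \|\tau(m_P(u))\, e(k_P(u)g)\|_{V_\tau}.$$

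The second step is to invoke Harish-Chandra's fundamental estimate for tempered representations: for the tempered representation $\tau$ of $M(F)$, there exists a continuous seminorm $\nu$ on the space of smooth vectors $V_\tau^\infty$ such that $\|\tau(m)v\|_{V_\tau}\leq \Xi^M(m)\,\nu(v)$ for all $m\in M(F)$ and $v\in V_\tau^\infty$, where $\Xi^M$ is Harish-Chandra's spherical function on $M(F)$. Since $e$ is smooth and $k_P(u)\in K$ is compact-valued, $\nu(e(k_P(u)g))$ is bounded uniformly in $u$ by a constant depending only on $e$ and $g$. Consequently, the absolute convergence of $J_{P'|P}(\tau_\lambda)(e)(g)$ is reduced to finiteness of the scalar integral
$$\int_{(U(F)\cap U'(F))\backslash U'(F)} \delta_P(m_P(u))^{1/2}\, e^{\langle \Re(\lambda),\, H_M(m_P(u))\rangle}\, \Xi^M(m_P(u))\, du.$$

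The third step is the classical convergence estimate, reduced to the rank one case by the standard decomposition of $(U(F)\cap U'(F))\backslash U'(F)$ as an iterated product of root subgroups indexed by $\Sigma(P)\cap \Sigma(\bar P')$. For each such root $\alpha$, one is left with a rank one integral of Harish-Chandra's $c$-function type over the corresponding $\SL_2$-subgroup; Harish-Chandra's explicit estimate shows this rank one integral converges precisely when $\langle \Re(\lambda),\check\alpha\rangle > 0$, and the full integral converges provided this holds for every $\alpha\in \Sigma(P)\cap \Sigma(\bar P')$. Taking the intersection of these half-spaces yields the stated positive cone.

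The main technical obstacle is the rank one reduction together with the sharp estimate on the integral of $\delta_P^{1/2}\Xi^M$ against the exponential factor $e^{\langle \Re(\lambda), H_M(\,\cdot\,)\rangle}$; both are the content of Harish-Chandra's analysis of the $c$-function and of matrix coefficients of tempered representations. Since the proposition is essentially a standard corollary of these classical results, in the final write-up one may invoke the Harish-Chandra estimates directly (as in Wallach's \emph{Real Reductive Groups} or Waldspurger's Plancherel memoir) rather than reproving them.
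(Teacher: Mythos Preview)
The paper does not give a proof of this proposition at all; it simply attributes the statement to Harish-Chandra and uses it as a black box. Your proposal correctly sketches the classical argument (Iwasawa decomposition, the Harish-Chandra bound $\|\tau(m)v\|\le \Xi^M(m)\nu(v)$ for tempered $\tau$, and the rank-one reduction for the resulting $c$-function integral), which is exactly what one finds in the standard references you cite. So there is nothing to compare: the paper's ``proof'' is a citation, and your write-up is a faithful outline of what that citation contains.
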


We will use this proposition in later sections to show some generalized Jacquet integrals are absolutely convergent.

\subsection{The open orbit method}
In this section we will give a brief overview of the open orbit method. The purpose of this method is to study the distinction of induced representations, it is an application of the geometric lemma due to Bernstein and Zelevinsky. Let $G$ be a connected reductive group defined over $F$, and $H\subset G$ is a closed subgroup such that $X=H\backslash G$ is a spherical variety of $G$ (i.e. the Borel subgroup has an open orbit). Let $P=MU$ be a parabolic subgroup of $G$ and $(\tau,V_{\tau})$ be an irreducible admissible representation of $M(F)$. We want to study the Hom space $Hom_{H(F)}(I_{P}^{G}(\tau),\chi)$ where $\chi$ is some character of $H(F)$. We say $(\pi,V_{\pi})=(I_{P}^{G}(\tau), I_{P}^{G}(V_{\tau}))$ is $(H,\chi)$-distinguished (or just H-distinguished if $\chi$ is trivial) if the Hom space is nonzero. For simplicity, we assume that $\chi$ is trivial.
\\
\\
\textbf{The geometric lemma} (Bernstein-Zelevinsky, \cite{BZ77}) There is an ordering $\{P(F)y_i H(F) \}_{i=1}^{N}$ on the double coset $H(F)\backslash G(F)/P(F)$ such that
$$Y_i=\cup_{j=1}^{i} P(F)y_i H(F)$$
is open in $G(F)$ for any $1\leq i\leq N$.

With the filtration above, for $1\leq i\leq N$, define
$$V_i=\{f\in I_{P}^{G}(V_{\tau})| supp(f)\subset Y_i\}.$$
Then we have $V_1\subset V_2\subset \cdots \subset V_N=V_{\pi}$ and $V_i$ is $H(F)$-invariant for all $i$. In particular, this implies that if $I_{P}^{G}(\tau)$ is H-distinguished, there exists $i$ such that $Hom_{H(F)}(V_i/V_{i-1},\chi)\neq 0$ (here $V_0=\{0\}$). Moreover, for any $1\leq i\leq N$, it is easy to see that the map
$$f\in V_i\mapsto \phi_f(h):=f(y_i h)$$
is an isomorphism between $V_i/V_{i-1}$ and $ind_{H_i}^{H} (\delta_{P}^{1/2} \tau^{y_i}|_{H_i})$ ($ind_{H_i}^{H}$ is the compact induction). Here $H_i=H(F)\cap y_{i}^{-1}P(F)y_i=y_{i}^{-1} P_i y_i$ with $P_i=P(F)\cap y_{i}H(F)y_{i}^{-1}$. By applying the reciprocity law, we have a necessary condition for $I_{P}^{G}(\tau)$ to be H-distinguished.

\begin{prop}
If $I_{P}^{G}(\tau)$ is H-distinguished, there exists $i$ such that $\tau$ is $(P_i,\delta_{P_i}\delta_{P}^{1/2})$-distinguished. Here we view $\tau$ as a representation of $P(F)$ by making it trivial on $U(F)$.
\end{prop}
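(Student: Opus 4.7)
The proposition is the standard ``necessary condition'' extracted from the geometric lemma, and the plan is to combine the filtration recalled above with Frobenius reciprocity and a conjugation by $y_i$.

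First I would descend the distinction down the filtration.  Fix a nonzero $\ell\in\Hom_{H(F)}(I_P^G(\tau),\mathbf{1})$ and let $i$ be the least index for which $\ell|_{V_i}\neq 0$.  Minimality forces $\ell|_{V_{i-1}}=0$, so $\ell$ factors through the quotient and yields a nonzero element of
$$\Hom_{H(F)}(V_i/V_{i-1},\mathbf{1}).$$
This step is purely formal and requires only that the filtration is $H(F)$-stable, which was already established.

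Next I would invoke the explicit identification
$$V_i/V_{i-1}\;\cong\;\ind_{H_i}^{H(F)}\bigl(\delta_P^{1/2}\tau^{y_i}|_{H_i}\bigr)$$
recalled in the excerpt and apply the second adjunction for compact induction,
$$\Hom_{H(F)}\bigl(\ind_{H_i}^{H(F)}\sigma,\mathbf{1}\bigr)\;\cong\;\Hom_{H_i}\bigl(\sigma,\delta_H\delta_{H_i}^{-1}\bigr),$$
to produce a nonzero element of $\Hom_{H_i}(\delta_P^{1/2}\tau^{y_i}|_{H_i},\delta_H\delta_{H_i}^{-1})$.  I would then transport this by conjugation by $y_i$: since $H_i=y_i^{-1}P_iy_i$, conjugation is a topological group isomorphism $H_i\xrightarrow{\sim}P_i$ which carries Haar measure to Haar measure (hence $\delta_{H_i}$ to $\delta_{P_i}$) and carries $\tau^{y_i}|_{H_i}$ to $\tau|_{P_i}$, where $\tau$ is viewed as a representation of $P(F)$ trivial on $U(F)$.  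Collecting the characters one obtains a nonzero element of $\Hom_{P_i}(\tau,\delta_{P_i}\delta_P^{1/2})$, which is the claim.

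The conceptual work is essentially done after Step~1; the one place where attention is needed is the bookkeeping of modular characters in the last two steps.  Factors of $\delta_P$, $\delta_H$, $\delta_{H_i}$ and $\delta_{y_iHy_i^{-1}}$ each appear along the way, and the transport by $y_i$ shuffles them between the $H$-side and the $P$-side; the claimed character $\delta_{P_i}\delta_P^{1/2}$ has to arise as the net effect of the Frobenius twist against the conjugation.  This is purely formal but is the one step where one has to be careful to avoid inverting or dropping a modulus.
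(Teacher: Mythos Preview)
Your proposal is correct and follows exactly the approach the paper indicates: the paper does not give a formal proof of this proposition but simply states it as the immediate consequence of the preceding discussion (``By applying the reciprocity law, we have a necessary condition\ldots''), namely the filtration $V_1\subset\cdots\subset V_N$, the identification $V_i/V_{i-1}\cong\ind_{H_i}^{H}(\delta_P^{1/2}\tau^{y_i}|_{H_i})$, and Frobenius reciprocity. Your write-up supplies precisely these steps, with the additional care of tracking the modular characters through the conjugation by $y_i$; this is more detail than the paper gives, but the strategy is identical.
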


What we are interested is the opposite direction of the proposition above. In other words, we want to have some sufficient conditions for $I_{P}^{G}(\tau)$ to be H-distinguished in terms of $V_i/V_{i-1}$. This is known as the open orbit method and the closed orbit method. For our purpose, we only consider the open orbit method.

Assume that $\tau$ is $(P_1,\delta_{P_1}\delta_{P}^{1/2})$-distinguished, we want to show that $\pi$ is $H$-distinguished. For simplicity, assume that $H(F)P(F)$ is open in $G(F)$ and $y_1=1$. Choose a nonzero element $l_0$ in the Hom space for $\tau$, it gives a nonzero element $l$ in $Hom_{H(F)}(V_1,1)$ by integrating $l_0$ over $H_1(F)\backslash H(F)$. Then we would like to extend this integral to $V_{\pi}$, which will gives us a nonzero element in $Hom_H(F)(V_{\pi},1)$. However, the integral will not be absolutely convergent in general, one need to show that it have holomorphic continuation. In our case, the integral over $H(F)/H_1(F)$ will be some generalized Jacquet integral. In Section 5 and 6, we will use Proposition \ref{convergent} to show that the integral is absolutely convergent for some $\pi$ with positive exponents, this will prove Theorem \ref{main} and Theorem \ref{F=R}. Then in section 7, we will talk about how to remove the restrains on the exponents by applying R. Gomez's result on the holomorphic continuation of generalized Jacquet integral.

\section{The relation between Conjecture \ref{jiang} and Conjecture \ref{epsilon}}
The goal of this section is to prove the following proposition.

\begin{prop}\label{epsilon 1}
If $F=\BC$, then Conjecture \ref{jiang} will implies Conjecture \ref{epsilon}.
\end{prop}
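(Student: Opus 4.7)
The plan is to exploit the fact that for $F = \BC$ the quaternion $D$ does not exist (so $\pi_D := 0$), together with the explicit shape of archimedean $\epsilon$-factors. Because $m(\pi_D) = 0$ identically over $\BC$, Conjecture \ref{jiang} reduces to the assertion $m(\pi) = 1$ for every irreducible generic $\pi$ of $\GL_6(\BC)$ with central character $\chi^2$. To deduce Conjecture \ref{epsilon}, whose hypothesis forces $\chi^2 = 1$, it therefore suffices to prove $\epsilon(1/2, \pi, \wedge^3) = +1$ for every generic $\pi$ with trivial central character.

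By the Langlands classification over $\BC$ such a $\pi$ is a principal series, so its Langlands parameter is a direct sum $\phi_\pi = \bigoplus_{i=1}^6 \chi_i$ of characters of $W_\BC = \BC^\times$. Writing $\chi_i(z) = (z/|z|)^{m_i}\,|z|^{2 s_{0,i}}$ with $m_i \in \BZ$ and $s_{0,i} \in \BC$, trivial central character becomes $\sum_i m_i = 0$ and $\sum_i s_{0,i} = 0$. Multiplicativity of $\epsilon$ gives
\begin{equation*}
\epsilon(1/2,\pi,\wedge^3) = \prod_{1 \le i<j<k\le 6} \epsilon(1/2,\chi_i\chi_j\chi_k,\psi),
\end{equation*}
and the classical formula for the local $\epsilon$-factor of a character of $\BC^\times$ with respect to the self-dual measure $dx$ and additive character $\psi$ fixed in the paper gives $\epsilon(1/2,\chi_i\chi_j\chi_k,\psi) = i^{|m_i+m_j+m_k|}$, with any analytic factors depending linearly on the $s_{0,i}$ cancelling in the full product by virtue of $\sum_i s_{0,i} = 0$. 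The whole assertion therefore collapses to the congruence
\begin{equation*}
S:=\sum_{1 \le i<j<k \le 6} |m_i+m_j+m_k| \equiv 0 \pmod 4.
\end{equation*}

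This last point is an elementary counting exercise. Every triple $\{i,j,k\}$ pairs with its complement $\{l,m,n\}$ inside $\{1,\dots,6\}$, and $\sum_i m_i = 0$ gives $m_i+m_j+m_k = -(m_l+m_m+m_n)$, so $|m_i+m_j+m_k| = |m_l+m_m+m_n|$ and $S = 2T$, where $T$ is the sum of $|m_1+m_j+m_k|$ over the $\binom{5}{2}=10$ triples containing the index $1$. Using $|a|\equiv a \pmod 2$ one gets
\begin{equation*}
T \equiv \sum_{2 \le j<k \le 6}(m_1+m_j+m_k) = 10\,m_1 + 4\sum_{i\neq 1} m_i = 10 m_1 - 4 m_1 = 6\,m_1 \equiv 0 \pmod 2,
\end{equation*}
so $4 \mid S$ and $\epsilon(1/2,\pi,\wedge^3) = i^S = +1$, as required.

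The step I expect to be the main obstacle is not the combinatorics, but rather pinning down the precise normalization of the archimedean $\epsilon$-factor so as to justify the clean identity $\epsilon(1/2,\chi_i\chi_j\chi_k,\psi) = i^{|m_i+m_j+m_k|}$ under the paper's conventions for $\psi$ and self-dual $dx$, and checking that all analytic $s_{0,i}$-dependent contributions are multiplicative and linear in the $s_{0,i}$ (so that they telescope to $1$ under $\sum_i s_{0,i} = 0$); once this is verified, the rest of the argument is immediate.
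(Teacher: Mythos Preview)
Your argument is correct, but it takes a different route from the paper's proof. The paper also reduces immediately to showing $\epsilon(1/2,\pi,\wedge^3)=1$, but instead of writing out all twenty summands of $\wedge^3\phi$ it splits $\phi=\phi_1\oplus\phi_2$ with $\dim\phi_1=5$, $\dim\phi_2=1$, so that $\wedge^3\phi=\wedge^3\phi_1\oplus(\wedge^2\phi_1\otimes\phi_2)$. The trivial central character forces $\det\phi_1\otimes\det\phi_2=1$, which makes these two summands dual to one another; then the general identity $\epsilon(1/2,\rho)\epsilon(1/2,\rho^\vee)=\det\rho(-1)$ gives $\epsilon(1/2,\pi,\wedge^3)=\det(\wedge^3\phi_1)(-1)=(\det\phi_1)^6(-1)=1$, with no explicit archimedean $\epsilon$-factor formula needed. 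Your pairing of complementary triples is in fact the shadow of this same duality, just seen after fully diagonalizing. The paper's version has the advantage that it never touches the normalization issue you flagged, and it transports verbatim to the real case when $\pi_D=0$ (as the paper indeed does in Section 6.1). Your version is more explicit and self-contained but is specific to $\BC$, and the worry you raised about the $s_{0,i}$-dependence is actually a non-issue: the archimedean $\epsilon$-factor for $\BC$ with the self-dual measure is literally the constant $i^{|m|}$, independent of $s$, so nothing needs to telescope.
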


\begin{proof}
Since $F=\BC$, $\pi_D$ is always 0. Hence Conjecture \ref{jiang} tells us that the multiplicity $m(\pi)$ is always 1. Therefore in order to prove Conjecture \ref{epsilon}, it is enough to show that the epsilon factor $\epsilon(1/2,\pi,\wedge^3)$ equals to 1 for any irreducible generic representations $\pi$ of $\GL_6(F)$ with trivial central character.

By the Langlands classification, we can find a generic representation $\sigma=\sigma_1\times \sigma_2$ of $GL_5(F)\times \GL_1(F)$ such that $\pi$ is the parabolic induction of $\sigma$. Let $\phi$ be the Langlands parameter of $\pi$ and $\phi_i$ be the Langlands parameter of $\sigma_i$ for $i=1,2$, we have $\phi=\phi_1\oplus \phi_2$. This implies
\begin{eqnarray*}
\wedge^3(\phi)&=&\wedge^3(\phi_1\oplus \phi_2)=\wedge^3(\phi_1)\oplus (\wedge^2(\phi_1)\otimes \phi_2).
\end{eqnarray*}
Since the central character of $\pi$ is trivial, $\det(\phi)=\det(\phi_1)\otimes \det(\phi_2)=1$. Therefore $(\wedge^3(\phi_1))^{\vee}=\wedge^2(\phi_1)\otimes \det(\phi_1)^{-1}=\wedge^2(\phi_1)\otimes \det(\phi_2)=\wedge^2(\phi_1)\otimes \phi_2$, hence
$$\epsilon(1/2,\pi,\wedge^3)=\det(\wedge^3(\phi_1))(-1)=(\det(\phi_1))^6(-1)=1.$$
This finishes the proof of the proposition.
\end{proof}

\section{The tempered case}
In this section, we prove our main theorem for the tempered case, the method is very similar to the case $F=\BR$ we proved in \cite{Wan16}. Let $\pi$ be a tempered representation of $G=\GL_6(F)$ with central character $\chi^2$, our goal is to show that $m(\pi)=1$. Since we already know that $m(\pi)\leq 1$, it is enough to show that
\begin{equation}\label{1.1}
m(\pi)\neq 0.
\end{equation}

For all $T\in End(\pi)^{\infty}$, define
$$\CL_{\pi}(T)=\int_{\zh}^{\ast} Trace(\pi(h^{-1})T)\xi(h)\omega(h)dh.$$
Here $\int_{\zh}^{\ast}$ is the normalized integral defined in Proposition 5.1 of \cite{Wan16}. Note that the arguments in the loc. cit. is for the case when $F=\BR$, but they also work for $F=\BC$. By Lemma 5.2 of the loc. cit., for any $h,h'\in H(F)$, we have
\begin{equation}\label{1.2}
\CL_{\pi}(\pi(h)T\pi(h'))=\xi(hh')\omega(hh')\CL_{\pi}(T).
\end{equation}
For $e,e'\in \pi$, define $T_{e,e'}\in End(\pi)^{\infty}$ to be
$e_0\in \pi\mapsto (e_0,e')e.$
Set $\CL_{\pi}(e,e')=\CL_{\pi}(T_{e,e'})$, then we have
$$\CL_{\pi}(e,e')=\int_{\zh}^{\ast} (e,\pi(h)e')\omega(h)\xi(h) dh.$$
If we fix $e'$, by \eqref{1.2}, the map $e\in \pi\rightarrow \CL_{\pi}(e,e')$ belongs to $Hom_H(\pi,\omega\otimes \xi)$. Since $Span \{T_{e,e'}\mid e,e'\in \pi\}$ is dense in $End(\pi)^{\infty}$, we have
$$\CL_{\pi}\neq 0\Rightarrow m(\pi)\neq 0.$$
Hence in order to show the multiplicity $m(\pi)$ is nonzero, it is enough to show that the operator $\CL_{\pi}$ is nonzero.

Since we are in the complex case, only $\GL_1(F)$ have discrete series, hence $\pi$ is a principal series. Let $R=M_RU_R$ be a good minimal parabolic subgroup of $G$ in the sense that $RH$ is Zariski open in $G$. The existence of such $R$ is proved in Proposition 4.2 of \cite{Wan16}. It is also proved in the same proposition that for all such $R$, we have $H_R:=H\cap R=Z_G$. Hence the reduced model associated to $R$ is just $(M_R, Z_G)$. Since $\pi$ is a principal series, there is an unitary character $\tau$ of $M_R$ such that $\pi=I_{R}^{G}(\tau)$. For $T_0\in End(\tau)^{\infty}$, define
$$\CL_{\tau}(T_0)=Trace(T_0).$$
By Proposition 5.9 of \cite{Wan16}, the nonvanishing property of $\CL_{\pi}$ is invariant under the parabolic induction, hence we have
$$\CL_{\pi}\neq 0\iff \CL_{\tau}\neq 0.$$
Here the arguments in the loc. cit. is for the case when $F=\BR$, but they also work for $F=\BC$. Since $\CL_{\tau}$ is obviously nonzero, we have $\CL_{\pi}\neq 0$. This proves $m(\pi)\neq 0$ and hence finishes the proof of Theorem \ref{main} for tempered representations.

\section{The proof of Theorem \ref{main}}
\subsection{The case when $\bar{P}\subset Q$}
In this section, we prove the first part of Theorem \ref{main}. In other word, we assume that $\bar{P}\subset Q$. Then there are four possibilities for $Q$: type $(6)$, type $(4,2)$, type $(2,4)$ or type $(2,2,2)$. The idea is to first reduce our problem to the reduced model $(L,H\cap Q)$ by the open orbit method, then reduce it to the tempered case which has been considered in the previous section.

\textbf{If $Q=G$ is of type $(6)$}, by twisting $\pi$ by some characters, we can assume that $\pi$ is tempered. Note that twisting by characters will not change the multiplicities. Then by applying the result in the last section, we know $m(\pi)\neq 0$ and this proves Theorem \ref{main}.

\textbf{If $Q$ is of type $(4,2)$}, then $L=\GL_4(F)\times \GL_2(F)$ and $H_Q=H\cap Q$ is of the form
$$H_Q=H_0 U_{0,Q}$$
where
$$U_{0,Q}(F)=\{ u=u(X):=\begin{pmatrix} 1 & X & 0 \\ 0 & 1 & 0 \\ 0 & 0 & 1 \end{pmatrix}| \; X\in M_2(F)\}.$$
The restriction of the character $\xi$ on $U_{0,Q}(F)$ is just $\xi(u(X))=\psi(\tr(X))$ and the character $\omega$ on $H_0$ is defined as usual. The model $(L,H_Q)$ is middle model introduced in \cite{Wan15}, it can be understood as the model between the Ginzburg-Rallis model and the trilinear $\GL_2$ model. By the definition of $Q$, $\pi$ is of the form $I_{Q}^{G}(\tau_1 |\;|^{t_1}\times \tau_2 |\;|^{t_2})$ where $\tau_1, \tau_2$ are tempered and $t_1<t_2$. Hence any element $f\in \pi$ is a smooth function $f:G(F)\rightarrow \tau=\tau_1 |\;|^{t_1}\times \tau_2 |\;|^{t_2}$ such that
\begin{equation}\label{2.3}
f(lug)=\delta_Q(l)^{1/2}\tau(l)f(g)
\end{equation}
for all $l\in L(F),u\in U_Q(F)$ and $g\in G(F)$. Here we use the letters $\pi,\sigma,\tau$ to denote both the representations and the underlying vector spaces.
Let $\bar{Q}=LU_{\bar{Q}}$ be the opposite parabolic subgroup of $Q$, then it is easy to see that $U_{\bar{Q}}\subset U$ and $U=U_{\bar{Q}}U_{0,Q}$. For any $f\in \pi$, define
\begin{equation}\label{2.1}
J_Q(f)=\int_{U_{\bar{Q}}(F)} f(u)\xi^{-1}(u)du.
\end{equation}
By Proposition \ref{convergent} together with the assumption that $t_1<t_2$, the integral above is absolutely convergent.

\begin{prop}\label{2.2}
\begin{enumerate}
\item For all $f\in \pi, u\in U_{\bar{Q}}(F)$ and $l\in H_Q(F)$, we have
\begin{equation}\label{2.4}
J_Q(\pi(u)f)=\xi(u)J(f)
\end{equation}
and
\begin{equation}\label{2.5}
J_Q(\pi(l)f)=\tau(l)J(f).
\end{equation}
\item The function
$$J_Q:\pi\rightarrow \tau,\; f\rightarrow J_Q(f)$$
is surjective.
\end{enumerate}
\end{prop}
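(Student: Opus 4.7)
The plan is to verify (2.4), (2.5), and the surjectivity in (2) by direct manipulation of the integral defining $J_Q$ together with a standard Bruhat-cell construction.

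For (2.4), I will simply substitute $u \mapsto u u_0^{-1}$ in $J_Q(\pi(u_0) f)$. Since $U_{\bar Q}(F)$ is a group, the Haar measure is invariant under right translation, and the character property of $\xi$ on $U(F) \supset U_{\bar Q}(F)$ pulls out the scalar $\xi(u_0)$. This is immediate.

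For (2.5), the key observation is that $H_Q \subset L$: the diagonal $\GL_2 \simeq H_0$ embeds into $\GL_4 \times \GL_2 = L$, and $U_{0,Q}$ sits inside the $\GL_4$ factor. Using the transformation law for $\pi = I_Q^G(\tau)$, I will rewrite $f(ul) = \delta_Q(l)^{1/2} \tau(l) f(l^{-1} u l)$, then change variables $u \mapsto l u l^{-1}$ (with Jacobian $\delta_{\bar Q}(l) = \delta_Q(l)^{-1}$), arriving at $J_Q(\pi(l) f) = \delta_Q(l)^{-1/2} \tau(l) \int_{U_{\bar Q}(F)} f(v) \xi^{-1}(l v l^{-1}) \, dv$. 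The identity then reduces to two point verifications: (a) $\delta_Q|_{H_Q} = 1$, which for $l \in H_0$ follows because the diagonal $\GL_2$ acts on $\mathfrak u_Q$ (the $2 \times 4$ lower block) with determinant $|\det h|^4 |\det h|^{-4} = 1$, and which is trivial on the unipotent $U_{0,Q}$; and (b) $\xi$ is $H_Q$-conjugation-invariant on $U_{\bar Q}$, which for $l \in H_0$ is the defining property of $H_0$ as stabilizer of $\xi$ in $M$, and for $l = u(X, 0, 0) \in U_{0,Q}$ follows from the explicit commutation
\[
u(X, 0, 0) \, u(0, Y, Z) \, u(-X, 0, 0) = u(0, Y, Z + XY),
\]
together with the fact that $\xi|_{U_{\bar Q}}(u(0, Y, Z)) = \psi(\tr Y)$ depends only on the middle block $Y$.

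For surjectivity in (2), I will use the open Bruhat cell: the product map $Q(F) \times U_{\bar Q}(F) \to G(F)$ is an open embedding onto $Q(F) U_{\bar Q}(F)$, whose complement in the compact flag variety $Q(F) \backslash G(F)$ has strictly smaller dimension. Given $v \in V_\tau$, I pick $\phi \in C_c^\infty(U_{\bar Q}(F))$ with $\int_{U_{\bar Q}(F)} \phi(u) \xi^{-1}(u) \, du = 1$ (easily arranged by Fourier inversion on $U_{\bar Q}(F)$) and define $f$ on the big cell by $f(qu) := \delta_Q(q)^{1/2} \tau(q) \phi(u) v$, extending by zero. A direct evaluation then gives $J_Q(f) = v$. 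The only real subtlety is checking that this extension by zero yields a genuine smooth element of $\pi = I_Q^G(V_\tau)$ (and not just a function on an open dense set); this is the standard archimedean fact that a compactly supported smooth function on an open subset of a smooth compact manifold extends by zero to a smooth global function. Once that is granted, all remaining steps are routine calculations in the explicit $u(X, Y, Z)$ coordinates.
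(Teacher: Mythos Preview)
Your proposal is correct and follows essentially the same approach as the paper. For part (1) the paper simply says ``follows from \eqref{2.3} and changing variables in the integral \eqref{2.1}'', whereas you have spelled out the two verifications $\delta_Q|_{H_Q}=1$ and $\xi(lvl^{-1})=\xi(v)$ for $l\in H_Q$, $v\in U_{\bar Q}$; for part (2) your Bruhat-cell construction (choose $\phi\in C_c^\infty(U_{\bar Q}(F))$ with $\int \phi\,\xi^{-1}=1$, define $f$ on the open cell, extend by zero) is exactly what the paper does.
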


\begin{proof}
Part (1) follows from \eqref{2.3} and changing variables in the integral \eqref{2.1}. For part (2), fix a function $\varphi\in C_{c}^{\infty}(U_{\bar{Q}}(F))$ such that $\int_{U_{\bar{Q}}(F)} \varphi(u) \psi^{-1}(u)du=1$. For any $v\in \tau$, since $Q(F)U_{\bar{Q}}(F)$ is open in $G(F)$, the function
$$f(g)=\begin{array}{cc}\left\{ \begin{array}{ccl} \delta_{Q}(l)^{1/2}\tau(l)\varphi(u)v & if & g=u'lu\; with\; l\in L(F),u\in U_{\bar{Q}}(F),u'\in U_Q(F) \\ 0 & else & \\ \end{array}\right. \end{array}$$
lies inside $\pi$. Then we have
$$J_Q(f)=\int_{U_{\bar{Q}}(F)} f(u)\psi^{-1}(u)du=\int_{U_{\bar{Q}}(F)} \varphi(u)\psi^{-1}(u) v du=v.$$
This proves (2).
\end{proof}

We consider the Hom space $Hom_{H_Q(F)}(\tau,(\omega\otimes \xi)|_{H_Q(F)})$ and let $m(\tau)$ be the dimension of such space. The following proposition tells the relation between $m(\pi)$ and $m(\tau)$.
\begin{prop}
$$m(\tau)\neq 0\Rightarrow m(\pi)\neq 0.$$
\end{prop}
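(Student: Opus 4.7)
The plan is to reverse Proposition 2.2 via the open orbit recipe: starting from a nonzero element $\ell_\tau\in\Hom_{H_Q(F)}(\tau,(\omega\otimes\xi)|_{H_Q(F)})$, I would build a nonzero element of $\Hom_{H(F)}(\pi,\omega\otimes\xi)$ by composing with the Jacquet-type map $J_Q$. Concretely, define
\[
\ell_\pi:\pi\to\BC,\qquad \ell_\pi(f):=\ell_\tau\bigl(J_Q(f)\bigr).
\]
Non-vanishing of $\ell_\pi$ is immediate from Proposition 2.2(2): since $J_Q:\pi\to\tau$ is surjective and $\ell_\tau\neq 0$, we have $\ell_\pi\neq 0$.

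The work is then to check that $\ell_\pi$ is $(H(F),\omega\otimes\xi)$-equivariant. Since $\bar{Q}\subset P$ ensures $U_{\bar Q}\subset U$ and $U=U_{\bar Q}U_{0,Q}$, while $H_Q=H\cap Q=H_0\ltimes U_{0,Q}$ and $H_0$ normalizes $U_{\bar Q}$, every $h\in H(F)$ admits a factorization $h=u\cdot \ell$ with $u\in U_{\bar Q}(F)$ and $\ell\in H_Q(F)$. Thus it suffices to verify equivariance separately on these two pieces. For $u\in U_{\bar Q}(F)$, equation (2.4) gives
\[
\ell_\pi(\pi(u)f)=\ell_\tau(\xi(u)J_Q(f))=\xi(u)\,\ell_\pi(f)=(\omega\otimes\xi)(u)\,\ell_\pi(f),
\]
since $\omega$ is trivial on $U$. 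For $\ell\in H_Q(F)$, equation (2.5) and the defining property of $\ell_\tau$ yield
\[
\ell_\pi(\pi(\ell)f)=\ell_\tau(\tau(\ell)J_Q(f))=(\omega\otimes\xi)(\ell)\,\ell_\tau(J_Q(f))=(\omega\otimes\xi)(\ell)\,\ell_\pi(f).
\]
Combining the two gives $\ell_\pi\in\Hom_{H(F)}(\pi,\omega\otimes\xi)\setminus\{0\}$, so $m(\pi)\neq 0$.

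There is essentially no hard step: the entire content of the proposition has already been absorbed into Proposition 2.2, whose proof required the open-orbit convergence input from Proposition \ref{convergent} (using the exponent condition $t_1<t_2$ to make the defining integral for $J_Q$ absolutely convergent). The only point requiring mild care is the bookkeeping of the group decomposition $H=U_{\bar Q}\cdot H_Q$ and the observation that $\omega\otimes\xi$ restricted to $U_{\bar Q}$ agrees with $\xi$ (since $\omega$ is trivial on the full unipotent radical $U$). Once this is in place, the proof is a one-line composition argument, and no further analytic estimates or holomorphic continuation are needed in this regime.
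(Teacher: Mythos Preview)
Your proof is correct and follows essentially the same route as the paper: define $\ell_\pi=\ell_\tau\circ J_Q$, invoke surjectivity of $J_Q$ (Proposition~\ref{2.2}(2)) for nonvanishing, and use equations \eqref{2.4}--\eqref{2.5} for the $(H,\omega\otimes\xi)$-equivariance. The only cosmetic difference is that the paper factors a general $h\in H(F)$ as $h=h_1u_1$ with $h_1\in H_Q(F)$, $u_1\in U_{\bar Q}(F)$ and runs the computation in one chain, whereas you verify equivariance on the two generating subgroups $U_{\bar Q}(F)$ and $H_Q(F)$ separately; both arguments are equivalent.
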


\begin{proof}
If $m(\tau)\neq 0$, choose $0\neq l_0\in Hom_{H_Q(F)}(\tau,(\omega\otimes \xi)|_{H_Q(F)})$, define an operator $l$ on $\pi$ to be
$$l(f)=l_0(J_Q(f)).$$
Since $l_0\neq 0$ and $J_Q$ is surjective, we have $l\neq 0$. Hence we only need to show that $l\in Hom_{H(F)}(\pi,\omega\otimes \xi)$.

For $h\in H(F)$, we can write $h=h_1u_1$ with $h_1\in H_{Q}(F)$ and $u_1\in U_{\bar{Q}}(F)$. By \eqref{2.4} and \eqref{2.5}, we have
\begin{eqnarray*}
l(\pi(h)f)&=&l_0(J_Q(\pi(h_1u_1)f))=l_0(\tau(h_1)J_Q(\pi(u_1)f))\\
&=&\omega\otimes \xi(h_1) l_0(J_Q(\pi(u_1)f))=\omega\otimes \xi(h_1) l_0(\xi(u_1)J_Q(f))\\
&=&\omega\otimes \xi(h) l_0(J_Q(f))=\omega\otimes \xi(h) l(f).
\end{eqnarray*}
This implies $l\in Hom_{H(F)}(\pi,\omega\otimes \xi)$ and finishes the proof of the Proposition.
\end{proof}

By the proposition above, we only need to show that $m(\tau)\neq 0$. It is easy to see that the multiplicity $m(\tau)$ is invariant under the unramified twist, hence we may assume that $\tau$ is tempered (note that originally $\tau$ is of the form $\tau_1 |\;|^{t_1}\times \tau_2 |\;|^{t_2}$ with $\tau_1$ and $\tau_2$ being tempered). Then by applying the argument in the previous section to the middle model case, we can show that the multiplicity $m(\tau)$ is always nonzero for all tempered representations $\tau$. This proves Theorem \ref{main}.

\textbf{If $Q$ is of type $(2,4)$}, the argument is the same as the $(4,2)$ case, we will skip it here.

\textbf{If $Q$ is of type $(2,2,2)$}, the argument is still similar to the $(4,2)$ case: we first reduce to the trilinear $\GL_2$ model case by the open orbit method. Then after twisting by some characters we only need to consider the tempered case. Finally, by applying the argument in the previous section to the trilinear $\GL_2$ model case, we can show that the multiplicity is nonzero and this proves Theorem \ref{main}. We will skip the details here.

Now the proof of Theorem \ref{main}(1) is complete.

\subsection{The case when $Q\subset \bar{P}$}
In this section, we prove the second part of Theorem \ref{main}. Recall that in Section 1 we assume that $\pi=Ind_{B}^{G}(\Pi_{i=1}^{6} \chi_i)$ where $B$ is the lower Borel subgroup, $\chi_i=\sigma_i |\;|^{s_i}$, $\sigma_i$ are unitary characters, and $s_i$ are real numbers with $s_1\leq s_2\leq \cdots\leq s_6$. By the assumption $Q\subset \bar{P}$, we have $s_2<s_3$ and $s_4<s_5$. Also as in Section 1, we write $\pi=I_{\bar{P}}^{G}(\pi_0)$ with $\pi_0=\pi_1\times \pi_2\times \pi_3$ and $\pi_i$ be the parabolic induction of $\chi_{2i-1}\times \chi_{2i}$. Then $\pi$ is consisting of smooth functions $f\rightarrow \pi_0$ such that
\begin{equation}\label{3.1}
f(mug)=\delta_{\bar{P}}(m)^{1/2}\pi_0(m)f(g)
\end{equation}
for all $m\in M(F),u\in \bar{U}(F)$ and $g\in G(F)$. We still want to apply the open orbit method. For $f\in \pi$, define
\begin{equation}\label{5.3}
J(f)=\int_{U(F)} f(ug)\xi^{-1}(u) du.
\end{equation}
By Proposition \ref{convergent} together with the assumption on the exponents $s_i$, the integral above is absolutely convergent. Similarly as in the previous section, we can show that
\begin{equation}\label{5.2}
m(\pi_0)\neq 0\Rightarrow m(\pi)\neq 0.
\end{equation}
Here $m(\pi_0)$ is the multiplicity for the trilinear $\GL_2$ model. In fact, for $0\neq l_0\in Hom_{H_{0}(F)}(\pi_0,\omega)$. By a similar argument as in Proposition 5.2, we know that
$$l(f):=l_0(J(f))$$
is a nonzero element in $Hom_{H(F)}(\pi,\omega\otimes \xi)$. This proves \eqref{5.2}. Now by our assumption on $\pi_0$ together with the work by Loke for the trilinear $\GL_2$ model in \cite{L01}, we know $m(\pi_0)\neq 0$, hence we have $m(\pi)\neq 0$. This finishes the proof of Theorem \ref{main}.

\begin{rmk}
The assumption $Q\subset \bar{P}$ is only used to make the generalized Jacquet integral $J(f)$ to be absolutely convergent. Hence in general, if one can prove the holomorphic continuation of the generalized Jacquet integral $J(f)$, then the assumption $Q\subset \bar{P}$ in Theorem \ref{main}(2) can be removed. This will be discussed in Section 7.
\end{rmk}

\section{The proof of Theorem \ref{F=R}}
In this section by applying the open orbit method to the case when $F=\BR$, we prove Theorem \ref{F=R}. Let $\pi$ be an irreducible generic representation of $G(F)$ with central character $\chi^2$. With the same notation as in Section 1, there is a parabolic subgroup $Q=LU_Q$ containing the lower Borel subgroup and an essential tempered representation $\tau=\Pi_{i=1}^{k} \tau_i |\;|^{s_i}$ of $L(F)$ with $\tau_i$ tempered, $s_i\in \BR$ and $s_1<s_2<\cdots<s_k$ such that $\pi=I_{Q}^{G}(\tau)$.

\subsection{The case when $\pi_D=0$}
In this section we assume that $\pi_D=0$. Then by our assumptions in Theorem \ref{F=R}, $Q$ is nice. If $Q\subset \bar{P}$, let $\pi_0=I_{Q\cap M}^{M}(\tau)$, it is a generic representation of $M(F)$ and we have $\pi=I_{\bar{P}}^{G}(\pi_0)$. By the same argument as in Section 5.2, we can show that
\begin{equation}\label{5.1}
m(\pi_0)\neq 0\Rightarrow m(\pi)\neq 0
\end{equation}
where $m(\pi_0)$ is the multiplicity of the trilinear $\GL_2$ model. Since $\pi_D=0$, the Jacquet-Langlands correspondence of $\pi_0$ from $M(F)=(\GL_2(F))^3$ to $(\GL_1(D))^3$ is zero. By applying the result for the trilinear $\GL_2$ model in \cite{P90} and \cite{L01}, we have $m(\pi_0)=1$. Combining with \eqref{5.1}, we know $m(\pi)\neq 0$. Hence $m(\pi)=1$ since we already know $m(\pi)\leq 1$. Therefore
$$m(\pi)+m(\pi_D)=m(\pi)=1.$$
This proves Conjecture \ref{jiang}. For Conjecture \ref{epsilon}, we only need to show that when $\pi_D=0$, the epsilon factor $\epsilon(1/2,\pi,\wedge^3)$ is always 1. Since $\pi_{D}=0$, by the local Jacquet-Langlands correspondence in \cite{DKV84}, $\pi_0$ is not essential discrete series (i.e. the discrete series twisted by characters), hence at least one of the $\pi_i\; (i=1,2,3)$ is a principal series. Therefore we can find a generic representation $\sigma=\sigma_1\times \sigma_2$ of $\GL_5(F)\times \GL_1(F)$ such that $\pi$ is the parabolic induction of $\sigma$. Then by the same argument as in Section 3, we can show that
$$\epsilon(1/2,\pi,\wedge^3)=1.$$
This finishes the proof of Conjecture \ref{epsilon}.

If $Q\subset \bar{P}$, there are only four possibilities for $Q$: type $(6),(4,2),(2,4)$ and $(2,2,2)$. If $Q$ is type $(6)$, by twisting $\pi$ by some characters we can assume that $\pi$ is tempered, then both Conjecture \ref{jiang} and Conjecture \ref{epsilon} are proved in \cite{Wan16}. If $Q$ is type $(4,2)$ or $(2,4)$, by the same argument as in Section 5.1, we can reduce to the middle model case by the open orbit method. Then by twisting some characters, we only need to consider the tempered case which has already been proved in \cite{Wan16}. If $Q$ is type $(2,2,2)$, the argument is similar except replacing the middle model by the trilinear $\GL_2$ model.

Now the proof of Theorem \ref{F=R}(1) is complete.

\subsection{The case when $\pi_D\neq 0$}
In this section we assume that $\pi_D\neq 0$, as a result, $\pi=I_{\bar{P}}^{G}(\pi_0)$ is the parabolic induction of some essential discrete series $\pi_0=\pi_1 |\;|^{s_1}\times \pi_2 |\;|^{s_2} \times \pi_3 |\;|^{s_3}$ of $M(F)$ where $\pi_i$ are discrete series of $\GL_2(F)$ and $s_i$ are real numbers. As usual, we assume that $s_1\leq s_2\leq s_3$. On the mean time, $\pi_D$ is of the form $I_{\bar{P}_D}^{G_D}(\pi_{0,D})$ where $\pi_{0,D}=\pi_{1,D} |\;|^{s_1}\times \pi_{2,D} |\;|^{s_2} \times \pi_{3,D} |\;|^{s_3}$ is the Jacquet-Langlands correspondence of $\pi_0$ from $M(F)$ to $M_D(F)$. Let $m(\pi_0)$ (resp. $m(\pi_{0,D})$) be the multiplicity of the trilinear $\GL_2(F)$ (resp. $\GL_1(D)$) model.

\begin{prop}
With the notations above, in order to prove Theorem \ref{F=R}(2), it is enough to show that
\begin{equation}\label{6.1}
m(\pi_0)\neq 0\Rightarrow m(\pi)\neq 0;\; m(\pi_{0,D})\neq 0\Rightarrow m(\pi_D)\neq 0.
\end{equation}
\end{prop}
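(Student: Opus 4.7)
The plan is to deduce Theorem \ref{F=R}(2) from \eqref{6.1} in two steps: first establish the inequality $m(\pi) + m(\pi_D) \geq 1$, then reduce the epsilon factor statement to the corresponding theorem for the trilinear $\GL_2$ model.

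For the inequality, I would combine \eqref{6.1} with the trilinear $\GL_2$ theorem of Prasad \cite{P90}, extended to the archimedean case by Loke \cite{L01}. Since $\pi_0$ is essentially discrete series on $M(F) = \GL_2(F)^3$ with Jacquet-Langlands correspondence $\pi_{0,D}$ on $M_D(F) = \GL_1(D)^3$, the Prasad-Loke theorem gives $m(\pi_0) + m(\pi_{0,D}) = 1$. Since $m(\pi), m(\pi_D) \leq 1$ by \cite{JSZ11}, and whichever of $\{m(\pi_0), m(\pi_{0,D})\}$ equals $1$ forces the corresponding member of $\{m(\pi), m(\pi_D)\}$ to be nonzero via \eqref{6.1}, we conclude $m(\pi) + m(\pi_D) \geq 1$.

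For the epsilon factor statement, assume the central character of $\pi$ is trivial, and let $\phi = \phi_1 \oplus \phi_2 \oplus \phi_3$ be the Langlands parameter of $\pi$, where $\phi_i$ is the parameter of $\pi_i |\cdot|^{s_i}$ and $\omega_i = \det\phi_i$. The trivial central character hypothesis amounts to $\omega_1\omega_2\omega_3 = 1$. A direct computation of the exterior cube of a sum of three two-dimensional parameters gives
$$\wedge^3 \phi \;=\; (\phi_1 \otimes \phi_2 \otimes \phi_3) \;\oplus\; \bigoplus_{\substack{1\leq i,j \leq 3 \\ i\neq j}} (\omega_i \otimes \phi_j).$$
The triple product summand is self-dual. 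The six two-dimensional summands organize into three dual pairs, one for each $\phi_j$: $(\omega_i \otimes \phi_j)^{\vee} \cong \omega_k \otimes \phi_j$ whenever $\{i,j,k\} = \{1,2,3\}$, using $\omega_1\omega_2\omega_3 = 1$. Applying the local functional equation $\epsilon(1/2, V)\epsilon(1/2, V^{\vee}) = \det V(-1)$ to each pair contributes $\omega_j(-1)$ (the squared $\omega_i$ or $\omega_k$ factor vanishes), so the six two-dimensional pieces together contribute $(\omega_1\omega_2\omega_3)(-1) = 1$. Hence
$$\epsilon(1/2, \pi, \wedge^3) \;=\; \epsilon(1/2, \pi_1 \otimes \pi_2 \otimes \pi_3).$$
Invoking Prasad-Loke once more: under trivial central character, $\epsilon(1/2, \pi_1 \otimes \pi_2 \otimes \pi_3) = 1$ iff $m(\pi_0) = 1$ and $=-1$ iff $m(\pi_{0,D}) = 1$. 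If $\epsilon(1/2,\pi,\wedge^3) = 1$, then $m(\pi_0) = 1$, so $m(\pi) \neq 0$ by \eqref{6.1}, and combined with $m(\pi)\leq 1$ we get $m(\pi) = 1$. Conversely, if $m(\pi) = 0$, the contrapositive of \eqref{6.1} forces $m(\pi_0) = 0$, hence $m(\pi_{0,D}) = 1$, hence $\epsilon(1/2,\pi_1\otimes\pi_2\otimes\pi_3) = -1$, hence $\epsilon(1/2,\pi,\wedge^3) = -1$.

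The only subtle step is the exterior cube epsilon factor computation: one must correctly identify the pairing among the six two-dimensional summands and verify that the product of their $\epsilon$-factor contributions collapses to exactly $1$ precisely because $\omega_1\omega_2\omega_3 = 1$. The remaining reductions are then purely formal once \eqref{6.1} and the Prasad-Loke theorem are in hand; in particular this proposition reduces Theorem \ref{F=R}(2) entirely to the open orbit distinction arguments of the following subsection, which play the role analogous to Section 5.2 in the complex case.
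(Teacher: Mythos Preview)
Your proof is correct and follows essentially the same route as the paper: both use Prasad--Loke for the identity $m(\pi_0)+m(\pi_{0,D})=1$ to get the inequality, then the key epsilon factor identity $\epsilon(1/2,\pi,\wedge^3)=\epsilon(1/2,\pi_1\otimes\pi_2\otimes\pi_3)$ combined with Prasad--Loke's epsilon characterization and \eqref{6.1} to finish. The only difference is that the paper cites this epsilon identity from Section~6.2 of \cite{Wan16}, whereas you supply the computation of $\wedge^3(\phi_1\oplus\phi_2\oplus\phi_3)$ and the pairing of the six two-dimensional summands directly; your computation is correct and amounts to reproducing that citation inline.
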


\begin{proof}
By Prasad's result for the trilinear $\GL_2$ model, we have
\begin{equation}\label{6.2}
m(\pi_0)+m(\pi_{0,D})=1.
\end{equation}
Moreover, if we assume that the central character of $\pi_0$ is trivial on $H_0(F)$, we have
\begin{equation}\label{6.3}
m(\pi_0)=1 \iff \epsilon(1/2,\pi_0)=1; \; m(\pi)=0 \iff \epsilon(1/2,\pi_0)=-1.
\end{equation}

Combining \eqref{6.1} and \eqref{6.2}, we have $m(\pi)+m(\pi_D)\geq 1$, this proves the first part of Theorem \ref{F=R}(2). For the second part, assume that the central character of $\pi$ is trivial, in Section 6.2 of \cite{Wan16}, we proved that
\begin{equation}\label{6.4}
\epsilon(1/2,\pi,\wedge^3)=\epsilon(1/2,\pi_0).
\end{equation}
Now if $\epsilon(1/2,\pi,\wedge^3)=1$, by \eqref{6.4}, we know $\epsilon(1/2,\pi_0)=1$. Combining with \eqref{6.3}, we have $m(\pi_0)=1$, therefore $m(\pi)=1$ by \eqref{6.1}. On the other hand, if $m(\pi)=0$, by \eqref{6.1}, we have $m(\pi_0)=0$. Combining with \eqref{6.3}, we have $\epsilon(1/2,\pi_0)=-1$, therefore $\epsilon(1/2,\pi,\wedge^3)=-1$ by \eqref{6.4}. This finishes the proof of Theorem \ref{F=R}(2).
\end{proof}

By the proposition above, it is enough to show \eqref{6.1}. If $s_1=s_2=s_3$, by twisting $\pi$ by some characters, we may assume that $\pi$ is tempered (note that the multiplicities for both the Ginzburg-Rallis model the the trilinear $\GL_2$ model are invariant under twisting by characters). Then the relation \eqref{6.1} has already been proved in Corollary 5.13 of \cite{Wan16}. In fact, in this case, we even know $m(\pi)=m(\pi_0)$ and $m(\pi_D)=m(\pi_{0,D})$.

If $s_1<s_2=s_3$, let $\pi_{2,3}$ be the parabolic induction of $\pi_2\times \pi_3$, it is a tempered representation of $\GL_4(F)$. We also know that $\pi$ will be the parabolic induction of $\pi'=\pi_1 |\;|^{s_1}\times \pi_{2,3} |\;|^{s_2}$. Let $m(\pi')$ be the multiplicity for the middle model. By applying the open orbit method as in Section 5.1, we have
$$m(\pi')\neq 0\Rightarrow m(\pi)\neq 0.$$
Hence in order to prove $m(\pi_0)\neq 0\Rightarrow m(\pi)\neq 0$, it is enough to show that $m(\pi_0)\neq 0\Rightarrow m(\pi')\neq 0$. Again by twisting $\pi'$ by some characters, we may assume that $\pi'$ is tempered. Then by Corollary 5.13 of \cite{Wan16}, we have $m(\pi_0)=m(\pi')$ which implies $m(\pi_0)\neq 0\Rightarrow m(\pi)\neq 0$. The proof of the quaternion version is similar. This proves \ref{6.1}.

If $s_1=s_2<s_3$, the argument is the same as the case above, we will skip it here.

If $s_1<s_2<s_3$, \ref{6.1} follows directly from the open orbit method as in Section 5.1.

Now the proof of Theorem \ref{F=R}(2) is complete.

\section{Holomorphic continuation of the generalized Jacquet integral}
In previous sections, we have already seen that the extra conditions of $Q$ in Theorem \ref{main}(2) and Theorem \ref{F=R}(1) can be removed if the generalized Jacquet integral $J(f)$ defined in \ref{5.3} has holomorphic continuation. In this section, we are going to remove the condition on $Q$ based on the following hypothesis.

\textbf{Hypothesis:} The generalized Jacquet integrals have holomorphic continuation for all parabolic subgroups whose unipotent radical is abelian.

The Hypothesis has been proved by Gomez and Wallach in \cite{GW12} for the case when the stabilizer of the unipotent character is compact, and proved by Gomez in \cite{G} for the general case. The second paper is still in preparation, this is why we write it as a hypothesis.

Let $F=\BR$ or $\BC$, $\pi$ be a generic representation of $\GL_6(F)$ of the form $\pi=I_{\bar{P}}^{G}(\pi_0)$ for some generic representation $\pi_0$ of $M(F)=(\GL_2(F))^3$. By the discussion in Section 5.2 and 6.1, we know that in order to prove Theorem \ref{main}(2) and Theorem \ref{F=R}(1) for $\pi$, it is enough to prove
\begin{equation}\label{7.1}
m(\pi_0)\neq 0\Rightarrow m(\pi)\neq 0.
\end{equation}
where $m(\pi_0)$ is the multiplicity for the trilinear $\GL_2$ model.

Let $Q_{4,2}=L_{4,2}U_{4,2}$ be the parabolic subgroup of $\GL_6(F)$ containing $\bar{P}$ of type $(4,2)$, and let $\pi_1=I_{\bar{P}\cap L_{4,2}}^{L_{4,2}}(\pi_0)$. Then in order to prove \eqref{7.1}, it is enough to show that
\begin{equation}\label{7.2}
m(\pi_0)\neq 0\Rightarrow m(\pi_1)\neq 0,\;m(\pi_1)\neq 0\Rightarrow m(\pi)\neq 0
\end{equation}
where $m(\pi_1)$ is the multiplicity for the middle model defined in Section 5.1. Note that the unipotent radicals of $Q_{4,2}$ and $\bar{P}\cap L_{4,2}$ are all abelian. Therefore by the hypothesis, the generalized Jacquet integrals associated to $Q_{4,2}$ and $\bar{P}\cap L_{4,2}$ have holomorphic continuation. This allows us to apply the open orbit method as in Section 5 and 6, which give the relations in \eqref{7.2}. This proves \eqref{7.1}, and finishes the proof of Theorem \ref{main}(2) and Theorem \ref{F=R}(1) without the assumptions on $Q$.


\end{document}